\theoremstyle{plain}
  \newtheorem{thm}{Theorem}
  \newtheorem{defn}{Definition}
  \newtheorem{prop}{Proposition}
\theoremstyle{definition}
  \newtheorem{example}{Example}
  \newtheorem{rem}{Remark}
\newcommand{\mf}{\mathfrak}
\newcommand{\on}{\operatorname}
\newcommand{\g}{\mathfrak{g}}
\newcommand{\dd}{\mathfrak{d}}
\newcommand{\h}{\mathfrak{h}}
\newcommand{\D}{\mathcal D}
\newcommand{\C}{\mathcal C}
\newcommand{\Vect}{\mathcal Vect}
\newcommand{\Hom}{\operatorname{Hom}}
\newcommand{\la}{\langle}
\newcommand{\ra}{\rangle}
\newcommand{\id}{\on{id}}
\title{Quantization of Lie bialgebras revisited}
\author{Pavol \v{S}evera}
\address{Section of Mathematics, Universit\'{e} de Gen\`{e}ve, Geneva, Switzerland}
\email{pavol.severa@gmail.com}
\thanks{Supported in part by  the grant MODFLAT of the European Research Council and the NCCR SwissMAP of the Swiss National Science Foundation.}
\begin{document}
\maketitle

\begin{abstract}
We describe a new method of quantization of Lie bialgebras, based on a construction of Hopf algebras out of a cocommutative coalgebra and a braided comonoidal functor.
\end{abstract}

\section{Introduction}
The problem of functorial/universal quantization of Lie bialgebras was solved by Etingof and Kazhdan in \cite{EK}. They  quantize the double of the Lie bialgebra using a monoidal structure on the forgetful functor from the corresponding Drinfeld category and then define their quantization of the Lie bialgebra as a certain Hopf subalgebra of the Hopf algebra quantizing the double. Alternative solutions were given by Enriquez \cite{enr}, combining the approach of Etingof--Kazhdan with cohomological methods, and by Tamarkin \cite{tam}, based on formality of the little disks operad.

We present another solution of the problem. It is based on the fact that a cocommutative coalgebra and a braided comonoidal functor give rise, under certain invertibility conditions, to a Hopf algebra (Theorem \ref{thm:main}). This method avoids the need for quantization of the double, and is significantly simpler than the previous solutions, though it still needs a Drinfeld associator. It can also be used to quantize infinitesimally-braided Lie bialgebras to braided Hopf algebras.

There are two simple ideas behind this construction. The first one is a construction of non-commutative algebras (or coalgebras):  if $A_1$ and $A_2$ are associative algebra in a braided monoidal category then their tensor product $A_1\otimes A_2$ is an associative algebra; however, if $A_1$ and $A_2$ are commutative then $A_1\otimes A_2$ is not commutative in general. The same applies to the tensor product of coassociative coalgebras.

The second idea is that one can recover a group from its nerve. If $G$ is a group then we can consider the simplicial sets $E_nG:=G^{n+1}$ and $B_n G=(E_nG)/G$ ($E_\bullet G\to B_\bullet G$ is the simplicial model of the universal $G$-bundle). The group is $G=B_1G$ and the group multiplication on $G$ can be reconstructed from the face maps $B_2G\to B_1G$.

 If we replace $G$ by a cocommutative coalgebra $M$ in a braided monoidal category $\D$ then we can still define a simplicial coalgebra $E_n M:=M^{n+1}$ in $\D$, namely the cobar construction of the coalgebra $M$. As noted above, this simplicial coalgebra is not cocommutative. We then replace the operation $X\mapsto X/G$ with an appropriate braided comonoidal functor $F:\D\to\C$ to another braided monoidal category $\C$, set $B_nM:=F(E_nM)$ and construct (as a generalization of a group) a Hopf algebra structure on $B_1M=F(M\otimes M)$.


\section{Coalgebras in braided monoidal categories}
In this section we recall some basic definitions and facts concerning monoidal categories needed in the statement and the proof of Theorem \ref{thm:main}.
Additional details can be found in standard texts, such as \cite{maclane}.

A functor $F:\C_1\to\C_2$  between monoidal categories is a \emph{comonoidal functor}  (also called colax monoidal functor) if it comes equipped with a natural transformation 
$$c_{X,Y}:F(X\otimes Y)\to F(X)\otimes F(Y)$$
 such that
$$
\begin{tikzcd}
F((X\otimes Y)\otimes Z)\arrow{r}\arrow{d}& F(X\otimes Y)\otimes F(Z)\arrow{r} &(F(X)\otimes F(Y))\otimes F(Z)\arrow{d}\\
F(X\otimes (Y\otimes Z))\arrow{r}& F(X)\otimes F( Y\otimes Z)\arrow{r} &F(X)\otimes (F(Y)\otimes F(Z))
\end{tikzcd}
$$
commutes, together with a morphism
$$c:F(1_{\C_1})\to1_{\C_2}$$
compatible with the units. Let us stress that $c_{X,Y}$ and $c$ are \emph{not} required to be isomorphisms.  If $c_{X,Y}$'s and $c$ \emph{are} isomorphisms then  $F$ is a \emph{strongly monoidal functor}.

  A comonoidal functor $F:\C_1\to\C_2$  sends coalgebras to coalgebras: if $M\in\C_1$ is a coalgebra then the coproduct on $F(M)$ is the composition 
$$F(M)\to F(M\otimes M)\to F(M)\otimes F(M)$$
and the counit is the composition
$$F(M)\to F(1_{\C_1})\to1_{\C_2}.$$

If $\C$ is a braided monoidal category then the functor 
$$\otimes:\C\times\C\to\C$$
is a strongly monoidal functor, with the monoidal structure 
$$(X_1\otimes Y_1)\otimes(X_2\otimes Y_2)\to (X_1\otimes X_2)\otimes(Y_1\otimes Y_2)\quad (\forall X_1,X_2,Y_1,Y_2\in\C)$$
given by the parenthesized braid
\begin{equation}\label{eq:parb}
\begin{tikzpicture}[baseline=1cm]
\coordinate (diff) at (0.65,0);
\coordinate (dy) at (0,0.5);
\node(X1) at (0,0) {$(X_1$};
\node(Y1) at ($(X1)+(diff)$) {$Y_1)$};
\node(Z1) at (2,0) {$(X_2$};
\node(W1) at ($(Z1)+(diff)$) {$Y_2)$};
\node(X2) at (0,2) {$(X_1$};
\node(Z2) at ($(X2)+(diff)$) {$X_2)$};
\node(Y2) at (2,2) {$(Y_1$};
\node(W2) at ($(Y2)+(diff)$) {$Y_2)$};
\draw(X1)--(X2);
\draw(W1)--(W2);
\draw[line width=1ex,white] (Z1)..controls +(0,1) and +(0,-1)..(Z2);
\draw (Z1)..controls +(0,1) and +(0,-1)..(Z2);
\draw[line width=1ex,white] (Y1)..controls +(0,1) and +(0,-1)..(Y2);
\draw(Y1)..controls +(0,1) and +(0,-1)..(Y2);
\end{tikzpicture}
\end{equation}
In particular, if $M$ an $M'$ are coalgebras  in $\C$ then $M\otimes M'$ is a coalgebra as well, with the coproduct 
\begin{equation}\label{eq:fus}
\begin{tikzpicture}[baseline=-1cm]
\coordinate (diff) at (0.7,0);
\coordinate (dy) at (0,-0.8);
\node(A1) at (0,0) {$(M$};
\node(B1) at ($(A1)+(diff)$) {$M')$};
\node(A2) at (2,0) {$(M$};
\node(B2) at ($(A2)+(diff)$) {$M')$};
\node(A3) at ($(A1)+(0,-2.5)+0.5*(diff)$) {$M$};
\node(B3) at ($(A2)+(0,-2.5)+0.5*(diff)$) {$M'$};
\coordinate(A) at ($(A3)-(dy)$);
\coordinate(B) at ($(B3)-(dy)$);
\draw[line width=1ex,white] (B1)..controls +(0,-1) and +(0,1)..(B);
\draw (B1)..controls +(0,-1) and +(0,1)..(B);
\draw[line width=1ex,white]  (A2)..controls +(0,-1) and +(0,1)..(A);
\draw (A2)..controls +(0,-1) and +(0,1)..(A);
\draw (A1)..controls +(0,-1) and +(0,1)..(A);
\draw (B2)..controls +(0,-1) and +(0,1)..(B);
\draw (B)--(B3);
\draw(A)--(A3);
\end{tikzpicture}
\end{equation}
In this way the category of coalgebras in $\C$ becomes a monoidal category. If $F:\C_1\to\C_2$ is a braided comonoidal functor, i.e.\ if $F$ is a monoidal functor such that the diagram (where $\beta$ is the braiding)
$$
\begin{tikzcd}
F(X\otimes Y)\arrow{r}\arrow{d}[swap]{F(\beta^{\C_1}_{X,Y})}& F(X)\otimes F(Y)\arrow{d}{\beta^{\C_2}_{F(X),F(Y)}}\\
F(Y\otimes X)\arrow{r}&F(Y)\otimes F(X)
\end{tikzcd}
$$
commutes,
 then $F(M\otimes M')\to F(M)\otimes F(M')$ is a morphism of coalgebras; $F$ thus becomes a comonoidal functor from the category of coalgebras in $\C_1$ to the category of coalgebras in $\C_2$.

An algebra (i.e.\ a monoid) $H$ in the category of coalgebras in $\C$ is called a \emph{bialgebra} in $\C$. It is a \emph{Hopf algebra} if it comes with an \emph{invertible} morphism $S\in\Hom_\C(H,H)$ such that 
$$m_H\circ(S\otimes\id_H)\circ\Delta_H=m_H\circ(\id_H\otimes S)\circ\Delta_H=\eta_H\circ\epsilon_H$$
where $\epsilon_H:H\to1_\C$ is the counit, $\eta_H:1_\C\to H$ the unit, and $m_H$ and $\Delta_H$ the product and the coproduct of $H$.

\section{A construction of Hopf algebras}

In this section we shall construct a Hopf algebra out of a braided comonoidal functor and of a cocommutative coalgebra. The rest of the paper is an application of this construction.

Suppose $M$ is a coalgebra in a braided monoidal category $\D$.
Even if $M$ is cocommutative (i.e.\ if $\beta_{M,M}\circ\Delta_M=\Delta_M$, where $\Delta_M:M\to M\otimes M$ is the coproduct and $\beta_{M,M}:M\otimes M\to M\otimes M$ the braiding), the coalgebra $M\otimes M$ may be non-cocommutative. This simple observation will be our source of non-cocommutativity.
If $F:\D\to\C$ is a braided comonoidal functor to another braided monoidal category $\C$ then the coalgebra $F(M)$ is cocommutative, but $F(M\otimes M)$ might be not.

As we shall see, under certain compatibility conditions on $M$ and $F$, the coalgebra $F(M\otimes M)$ is a Hopf algebra.

\begin{defn}
Let $\D$ and $\C$ be  braided monoidal categories and $(M,\Delta_M,\epsilon_M)$ a cocommutative coalgebra in $\D$. A braided comonoidal functor
$$F:\D\to\C$$
 is \emph{$M$-adapted} if it satisfies these invertibility conditions: the composition
$$F(M)\xrightarrow{F(\epsilon_M)}F(1_\D)\to1_\C$$
is an isomorphism, and  for every objects $X,Y\in\D$ the morphism
$$\tau^{(M)}_{X,Y}:F((X\otimes M)\otimes Y)\to F(X\otimes M)\otimes F(M\otimes Y),$$
defined as the composition
\begin{multline*}
F((X\otimes M)\otimes Y)\xrightarrow{F((\id_X\otimes\Delta_M)\otimes\id_Y)}
F\bigl((X\otimes (M\otimes M))\otimes Y\bigr)\cong \\
\cong F\bigl((X\otimes M)\otimes(M\otimes Y)\bigr)\to F(X\otimes M)\otimes F(M\otimes Y),
\end{multline*}
is an isomorphism.
\end{defn}

\begin{rem}
The functor $\D\to\D$, $X\mapsto M\otimes X$, is comonoidal (since $\otimes:\D\times\D\to\D$ is strongly monoidal and $M$ is a coalgebra); explicitly, the comonoidal structure $M\otimes (X\otimes Y)\to (M\otimes X)\otimes(M\otimes Y)$ is
\begin{equation}\label{eq:tw}
\begin{tikzpicture}[baseline=1cm]
\coordinate (diff) at (0.65,0);
\coordinate (dy) at (0,0.5);
\node(Y1) at (0.7,0) {$M$};
\node(Z1) at (2,0) {$(X$};
\node(W1) at ($(Z1)+(diff)$) {$Y)$};
\node(X2) at (1,2) {$(M$};
\node(Z2) at ($(X2)+(diff)$) {$X)$};
\node(Y2) at (2.5,2) {$(M$};
\node(W2) at ($(Y2)+(diff)$) {$Y)$};
\coordinate (Delta) at ($(Y1)+(dy)$);
\draw(W1)..controls +(0,1) and +(0,-1)..(W2);

\draw[line width=1ex,white] (Z1)..controls +(0,1) and +(0,-1)..(Z2);
\draw (Z1)..controls +(0,1) and +(0,-1)..(Z2);
\draw[line width=1ex,white] (Delta)..controls +(0,0.7) and +(0,-0.7)..(Y2);
\draw(Delta)..controls +(0,0.7) and +(0,-0.7)..(Y2);

\draw (Delta)..controls +(0,0.7) and +(0,-0.7)..(X2);
\draw (Y1)--(Delta);
\end{tikzpicture}
\end{equation}
A braided comonoidal functor $F$ is $M$-adapted iff the composition $\D\xrightarrow{M\otimes}\D\xrightarrow{F}\C$, which is a priori comonoidal, is in fact strongly monoidal.
\end{rem}

\begin{thm}\label{thm:main}
Let $F:\D\to\C$ be an $M$-adapted functor. Then $F(M\otimes M)$ is a Hopf algebra in $\C$, with the structure given as follows.
\begin{itemize}
\item The coalgebra structure on $F(M\otimes M)$ is inherited from the coalgebra structure  on $M\otimes M$, with the coproduct \eqref{eq:fus} and counit $\epsilon_M\otimes\epsilon_M$.
\item The product on $F(M\otimes M)$ is the composition
\begin{multline}\label{eq:mult-def}
F(M\otimes M)\otimes F(M\otimes M)\xrightarrow{{\tau^{(M)}_{M,M}}^{-1}}F((M\otimes M)\otimes M)\\ \xrightarrow{F(\id_M\otimes\epsilon_M\otimes\id_M)}F(M\otimes M).
\end{multline}
\item The unit is
\begin{equation}\label{eq:unit-def}
1_\C\cong F(M)\xrightarrow{F(\Delta_M)}F(M\otimes M).
\end{equation}
\item The antipode is 
$$F(M\otimes M)\xrightarrow{F(\beta_{M,M})}F(M\otimes M)$$
where $\beta_{M,M}:M\otimes M\to M\otimes M$ is the braiding in $\D$.
\end{itemize}
\end{thm}
\begin{proof}
To simplify notation, let us replace $\D$ and $\C$ with  equivalent strict monoidal categories. The sequence of objects $X_n:=M^{\otimes(n+1)}$ ($n=0,1,2,\dots$) is a simplicial object of $\D$, with degeneracies ${\id_M^{\otimes k}}\otimes\Delta_M^{\vphantom\otimes}\otimes\id_M^{\otimes(n-k)}$ and faces ${\id_M^{\otimes k}}\otimes\epsilon_M^{\vphantom\otimes}\otimes\id_M^{\otimes(n-k)}$ (it is the cobar construction of the coalgebra $M$). Since $M$ is cocommutative, $\Delta_M:M\to M\otimes M$ is a morphisms of coalgebras, and thus $X_\bullet$ is a simplicial coalgebra in $\D$. As a result (using comonoidality of $F$), $Y_\bullet:=F(X_\bullet)$ is a simplicial coalgebra in $\C$.

By repeatedly using invertibility of $\tau^{(M)}_{X,Y}$'s we know that the composition
$$Y_n=F(M^{\otimes(n+1)})\xrightarrow{F(\id_M^{\vphantom\otimes}
\otimes\Delta_M^{\otimes(n-1)}\otimes\id_M^{\vphantom\otimes})} F(M^{\otimes2n})\to F(M\otimes M)^{\otimes n}$$
is an isomorphism. Since $M$ is cocommutative and $F$ is braided, both arrows are morphisms of coalgebras, hence we have an isomorphism of coalgebras in $\C$
$$Y_n\cong F(M\otimes M)^{\otimes n}.$$
In terms of this isomorphism, the face maps of $Y_\bullet$ are given by the product \eqref{eq:mult-def} on $F(M\otimes M)$ and the degeneracy maps are given by including the unit \eqref{eq:unit-def} of $F(M\otimes M)$. This shows that the product is  associative  and that the unit is a unit of the product (and that $Y_\bullet$ is the bar construction of the resulting algebra $F(M\otimes M)$).
In more detail, associativity follows from the fact that the map
$$F(M\otimes M)^{\otimes 3}\cong Y_3=F(M^{\otimes 4})\xrightarrow{F(\id_M\otimes\epsilon_M\otimes\epsilon_M\otimes\id_M)}Y_1=F(M\otimes M)$$
is equal to both ways of bracketing of the product, and the fact that the unit \eqref{eq:unit-def} is indeed a unit of the product is simply the fact that the compositions
$$F(M\otimes M)\xrightarrow{F(\Delta_M\otimes\id_M)} F(M\otimes M\otimes M)\xrightarrow{F(\id_M\otimes\epsilon_M\otimes\id_M)} F(M\otimes M)$$
$$F(M\otimes M)\xrightarrow{F(\id_M\otimes\Delta_M)} F(M\otimes M\otimes M)\xrightarrow{F(\id_M\otimes\epsilon_M\otimes\id_M)} F(M\otimes M)$$
are the identity.
The object $F(M\otimes M)$ is thus an algebra in the category of coalgebras in $\C$, i.e.\ it is a bialgebra in $\C$.

Finally, the fact that $F(\beta_{M,M})$ is an antipode for the bialgebra $F(M\otimes M)$ follows easily from the definitions.
\end{proof}

\begin{rem}
One can summarize the proof as follows. There is a simple way to recognize nerves of bialgebras among all simplicial coalgebras: if $Y_\bullet$ is a simplicial coalgebra in $\C$ then it is the nerve of a bialgebra in $\C$ iff for any $k$ and $l$ the morphism in $\C$
$$Y_{k+l}\to Y_k\otimes Y_l$$
obtained as the composition $Y_{k+l}\xrightarrow{\Delta} Y_{k+l}\otimes Y_{k+l}\xrightarrow{f\otimes g} Y_k\otimes Y_l$, where $f$ and $g$ are the first $k$-face and last $l$-face maps respectively, is an isomorphism of \emph{coalgebras}, and if $\epsilon:Y_0\to 1_\C$ is also an isomorphism of coalgebras. The bialgebra is then $Y_1$. We simply verified these conditions for our $Y_\bullet$, and finally checked that $Y_1$ is actually a Hopf algebra.

\end{rem}

\begin{rem}
The proof used simplicial methods in an essential way. It is not difficult to extract from it a direct proof:  the associativity of the product 
$$m:F(M\otimes M)\otimes F(M\otimes M)\to F(M\otimes M)$$
 follows from the commutative diagram
$$
\begin{tikzcd}
F(MM)F(MM)F(MM) \arrow{r}[swap]{|\tau^{-1}}\arrow{d}{\tau^{-1}|}\arrow[bend left=20]{rr}[swap]{|m}\arrow[bend right=60,start anchor={[xshift = -2em]}, end anchor={[xshift = -2em]}]{dd}[swap]{m|} & F(MM)F(MMM)\arrow{r}[swap]{|F(|\epsilon|)}\arrow{d}{\tau^{-1}} & F(MM)F(MM)\arrow{d}{\tau}\arrow[bend left,start anchor={[xshift = 2em]}, end anchor={[xshift = 2em]}]{dd}{m}\\
F(MMM)F(MM)\arrow{r}{\tau^{-1}}\arrow{d}{F(|\epsilon|)|} & F(MMMM)\arrow{r}{F(||\epsilon|)}\arrow{d}{F(|\epsilon||)}\arrow{rd}{F(|\epsilon\epsilon|)} & F(MMM)\arrow{d}{F(|\epsilon|)}\\
F(MM)F(MM)\arrow{r}{\tau^{-1}}\arrow[bend right=20]{rr}{m} & F(MMM)\arrow{r}{F(|\epsilon|)} & F(MM)
\end{tikzcd}
$$
where,  to keep its size reasonable, $\otimes$'s and indexes are dropped and $\id$'s are denoted by $|$.
\end{rem}

\begin{rem}
Let $\D_{univ}$ be the universal braided monoidal category with a chosen cocommutative coalgebra: the objects of $\D_{univ}$ are tensor powers of the coalgebra, the morphisms can be visualized as parenthesized braids with strands attached non-bijectively at the bottom. One can show that for any Hopf algebra $H$ in any braided monoidal category $\C$ there is an $M$-adapted functor $F:\D_{univ}\to\C$ (where $M\in\D_{univ}$ is the coalgebra) such that $H=F(M\otimes M)$ as a Hopf algebra, and also that $F$ is unique up to an isomorphism. 

This gives us a description/construction of the braided (or ordinary) PROP of braided (or ordinary) Hopf algebras as the universal braided (or symmetric) category $\C_{univ}$ admitting a $M$-adapted functor $F_{univ}:\D_{univ}\to\C_{univ}$ (i.e.\ such that any $M$-adapted functor $F:\D_{univ}\to \C$ factors via $F_{univ}$ through a strict braided (or symmetric) monoidal functor $\C_{univ}\to\C$).
\end{rem}

\begin{rem}
As we noticed above, the functor $\D\to\C$, $X\mapsto F(M\otimes X)$, is strongly monoidal, with the monoidal structure given by \eqref{eq:tw}. The functor can actually  be seen as as a braided strongly monoidal functor from $\D$ to the center of the monoidal category of $F(M\otimes M)$-modules in $\C$. Namely,  $F(M\otimes X)$  is a $F(M\otimes M)$-module via
$$
F(M\otimes M)\otimes F(M\otimes X)\xrightarrow{{{\tau^{(M)}_{M,X}}^{-1}}}
 F((M\otimes M)\otimes X)\xrightarrow{F(\id_M\otimes\epsilon_M\otimes\id_X)}F(M\otimes X)
$$
and the half-braiding $$F(M\otimes X)\otimes R\to R\otimes F(M\otimes X)$$
 for  $F(M\otimes M)$-modules $R$  is given by the morphism
$$q_X:F(M\otimes X)\to F(M\otimes X)\otimes F(M\otimes M)$$
represented by the diagram (with $F$ and its comonoidal structure depicted in gray)
$$
\begin{tikzpicture}
\fill[black!5] (-0.4,-0.3)--(1.2,-0.3)..controls +(0,1) and +(-0.2,-1)..(2.3,3.3)--(0.9,3.3)..controls+(-0.5,-0.8)and+(0.3,-0.9)..(0.1,3.3)--(-1.4,3.3)..controls+(0,-0.8)and+(0,0.8)..cycle;
\draw[black!20,thick](1.2,-0.3)..controls +(0,1) and +(-0.2,-1)..(2.3,3.3) (0.9,3.3)..controls+(-0.5,-0.8)and+(0.3,-0.9)..(0.1,3.3) (-1.4,3.3)..controls+(0,-0.8)and+(0,0.8)..(-0.4,-0.3);
\node(M) at (0,0) {$M$};
\node(X) at (0.8,0) {$X$};
\node(M1) at (-1,3) {$M$};
\node(M2) at (1.2,3) {$M$};
\node(M3) at (2,3) {$M$};
\node(X1) at (-0.2,3) {$X$};
\coordinate(S1) at (-0.2,0.6);
\coordinate(S2) at (-0.4,1.2);
\draw (S2)--(M2);
\draw [line width=2mm,black!5] (X)--(X1);
\draw (X)--(X1);
\draw [line width=2mm,black!5] (S1)--(M3);
\draw  (S1)--(M3) (M)--(M1) ;
\end{tikzpicture}
$$
as the composition
\begin{multline*}
F(M\otimes X)\otimes R\xrightarrow{q_X\otimes\id_R} F(M\otimes X)\otimes F(M\otimes M)\otimes R\\
\to F(M\otimes X)\otimes R\xrightarrow{\beta^\C_{F(M\otimes X),R}}R\otimes F(M\otimes X)
\end{multline*}

\end{rem}

\section{Infinitesimally braided categories}
In this section we recall Drinfeld's construction of braided monoidal categories via associators. We also observe how cocommutative coalgebras and braided comonoidal functors arise in this construction, as we need to feed them to Theorem \ref{thm:main}.

Let us fix  a field $K$  with $\on{char}K=0$. By a $K$-linear category we mean a category enriched over $K$-vector spaces, i.e.\ $\Hom(X,Y)$ should be a vector space over $K$ and the composition map $\Hom(X,Y)\times\Hom(Y,Z)\to\Hom(X,Z)$ should be bilinear.

An \emph{infinitesimally braided category} (\emph{i-braided category} for short) is a $K$-linear symmetric monoidal category $\C$ together with a natural transformation
$$t_{X,Y}:X\otimes Y\to X\otimes  Y$$
such that 
$$t_{X,Y\otimes Z}=t_{X,Y}\otimes\id_Z+(\id_X\otimes\sigma_{Z,Y})\circ(t_{X,Z}\otimes\id_Y)\circ(\id_X\otimes\sigma_{Y,Z})$$
(where $\sigma_{Y,Z}:Y\otimes Z\to Z\otimes Y$ is the symmetry) and
\begin{align*}
t_{Y,X}\circ\sigma_{X,Y}&=\sigma_{X,Y}\circ t_{X,Y}\\
  t_{X,1_\C}&=0.
\end{align*}
The transformation $t_{X,Y}$ defines a deformation of the symmetric monoidal structure of $\C$ to a braided monoidal structure: if $\epsilon$ is a formal parameter with $\epsilon^2=0$ and $\C_\epsilon$ is the same as $\C$ but with $\Hom_{\C_\epsilon}(X,Y)=\Hom_\C(X,Y)[\epsilon]$ then
$$\beta_{X,Y}=\sigma_{X,Y}\circ(\id_{X\otimes Y}+\epsilon\,t_{X,Y}/2)$$
is a braiding on $\C_\epsilon$ (with the monoidal structure inherited from $\C$).

\begin{example}
Let $\dd$ be a Lie algebra over $K$ and let $t\in(S^2\dd)^\dd$. The category of $U\dd$-modules is infinitesimally braided, with $t_{X,Y}$ given by the action of $t\in\dd\otimes\dd\subset U\dd\otimes U\dd$.
\end{example}

Let  $\C$ be an i-braided category and let $\C_\hbar$ be as $\C$, with $\Hom_{\C_\hbar}(X,Y)=\Hom_\C(X,Y)[\![\hbar]\!]$. Following Drinfeld \cite{drinfeld}, we can make $\C_\hbar$ to a braided monoidal category (extending the first order deformation $\C_\epsilon$) in the following way. Let
$$\Phi\in K\la\!\la x,y\ra\!\ra$$
be an element which is group-like 
 w.r.t\ the coproduct
$$\Delta x=x\otimes 1+1\otimes x,\ \Delta y=y\otimes 1+1\otimes y.$$
Let us define a new braiding on $\C_\hbar$ by
$$\beta_{X,Y}=\sigma_{X,Y}\circ e^{\hbar t_{X,Y}/2}$$
and a new associativity constraint $\gamma_{X,Y,Z}$ by
$$(X\otimes Y)\otimes Z\xrightarrow{\Phi(\hbar t_{X,Y},\hbar t_{Y,Z})}(X\otimes Y)\otimes Z\to X\otimes(Y\otimes Z).$$
\begin{rem}
If $\C$ is enriched over coalgebras  and $t_{X,Y}$ are primitive then the new braidings and associativity constraints are group-like. This is the reason for demanding $\Phi$ to be group-like and also for choosing $e^{\hbar t_{X,Y}/2}$ among all power series $1+\hbar t_{X,Y}/2+\dots$ in $t_{X,Y}$. We shall not need this fact in what follows.
\end{rem}

The pentagon and hexagon relations for $\beta_{X,Y}$'s and $\gamma_{X,Y,Z}$ translate to the following properties of $\Phi$:

\begin{prop}[\cite{drinfeld}]
 The category $\C_\hbar$ with the natural transformations $\beta$ and $\gamma$ is a braided monoidal category provided $\Phi$ is a \emph{Drinfeld associator}, i.e.\ if it satisfies the relations
$$\Phi(y,x)=\Phi(x,y)^{-1},$$
$$e^{x/2}\,\Phi(y,x)\,e^{y/2}\,\Phi(z,y)\,e^{z/2}\,\Phi(x,z)=1\;\text{ where }z=-x-y,$$
$$\Phi^{2,3,4}\,\Phi^{1,23,4}\,\Phi^{1,2,3}=\Phi^{1,2,34}\,\Phi^{12,3,4}.$$
\end{prop}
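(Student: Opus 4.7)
The plan is to reduce the pentagon for $\gamma$ and the two hexagons for $(\beta,\gamma)$ to power-series identities in the infinitesimal braid generators $t^{ij}$, and to recognize those identities as precisely the three defining axioms of a Drinfeld associator. By MacLane coherence I may assume the underlying symmetric monoidal structure on $\C$ is strict, so that $\gamma_{X,Y,Z}$ is an honest endomorphism $\Phi(\hbar t_{X,Y},\hbar t_{Y,Z})$ of $X\otimes Y\otimes Z$ and the symmetries $\sigma$ are strictly natural.

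The first substantive step is to set up the Drinfeld--Kohno relations on every $n$-fold tensor product. For $X_1,\dots,X_n\in\C$ and $i\ne j$, I define $t^{ij}\in\on{End}(X_1\otimes\cdots\otimes X_n)$ by applying $t_{X_i,X_j}$ in slots $i,j$, using the $\sigma$'s to bring those slots adjacent; this is well defined thanks to $t_{Y,X}\circ\sigma_{X,Y}=\sigma_{X,Y}\circ t_{X,Y}$ and $t_{X,1_\C}=0$. Iterating the axiom
\[
t_{X,Y\otimes Z}=t_{X,Y}\otimes\id_Z+(\id_X\otimes\sigma_{Z,Y})\circ(t_{X,Z}\otimes\id_Y)\circ(\id_X\otimes\sigma_{Y,Z})
\]
gives the key expansion $t_{X_{i_1}\otimes\cdots\otimes X_{i_k},\,X_{j_1}\otimes\cdots\otimes X_{j_\ell}}=\sum_{a,b}t^{i_a j_b}$; combined with naturality of $t$ this yields the Drinfeld--Kohno relations $[t^{ij},t^{kl}]=0$ for disjoint pairs $\{i,j\},\{k,l\}$ and $[t^{ij},t^{ik}+t^{jk}]=0$ for distinct $i,j,k$.

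With these relations in place the rest is a mechanical translation. On $X_1\otimes X_2\otimes X_3\otimes X_4$ the pentagon for $\gamma$ unfolds, after substitutions such as $t_{X_1\otimes X_2,X_3}=t^{13}+t^{23}$, into the identity $\Phi^{2,3,4}\,\Phi^{1,23,4}\,\Phi^{1,2,3}=\Phi^{1,2,34}\,\Phi^{12,3,4}$, which is the third defining axiom of $\Phi$. On $X_1\otimes X_2\otimes X_3$ each hexagon becomes, after collecting the $\sigma$'s, an identity in the algebra generated by $t^{12},t^{13},t^{23}$; using the centrality of $c:=t^{12}+t^{13}+t^{23}$ (itself a consequence of $[t^{ij},t^{ik}+t^{jk}]=0$) one writes $t^{13}=c-t^{12}-t^{23}$ and factors out the central $e^{\hbar c/2}$, reducing the remaining identity to Drinfeld's hexagon axiom in the form $z=-x-y$ with $x=\hbar t^{12}$, $y=\hbar t^{23}$. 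The second hexagon uses in addition the inversion axiom $\Phi(y,x)=\Phi(x,y)^{-1}$.

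The principal obstacle is the first step: verifying that $t^{ij}$ is canonically defined (independent of the particular sequence of $\sigma$-conjugations used to bring slots $i,j$ adjacent) and that the summation formula for $t_{A,B}$ really holds on arbitrary tensor products. Once the Drinfeld--Kohno relations are secured, the pentagon and hexagon verifications are purely formal matchings of axioms; naturality of $\beta$ and $\gamma$ is inherited from that of $t$ and $\sigma$, and the invertibility of $\gamma$ follows from $\Phi$ being group-like.
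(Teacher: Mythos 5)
Your outline is correct and is essentially the standard argument of Drinfeld's original paper, which is all the source this proposition rests on: the paper itself gives no proof, simply citing \cite{drinfeld}. Establishing the Drinfeld--Kohno relations for the operators $t^{ij}$ from the axioms of an i-braided category and then translating the pentagon and the two hexagons into the three associator identities (with the centrality of $t^{12}+t^{13}+t^{23}$ handling the substitution $z=-x-y$) is precisely that argument, so there is nothing to correct.
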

\noindent The last relation takes place in the algebra generated by symbols 
$$ t^{i,j}\quad (1\leq i,j\leq 4, i\neq j,\ t^{i,j}=t^{j,i})$$ modulo the relations 
$$[t^{i,j},t^{i,k}+t^{j,k}]=0\text{ and }[t^{i,j},t^{k,l}]=0\text{ if }\{i,j\}\cap\{k,l\}=\emptyset,$$
and $\Phi^{A,B,C}:=\Phi(t^{A,B},t^{B,C})$ with $t^{A,B}:=\sum_{i\in A,j\in B}t^{i,j}$. See \cite{drinfeld} for details, and also for a proof of existence of Drinfeld associators for every $K$.

We shall denote the category $\D_\hbar$ with its new braided monoidal structure by $\D^\Phi_\hbar$.

Let us now define infinitesimal versions of commutative coalgebras and of braided comonoidal functors.

\begin{defn}
Let $\D$ be an i-braided category. An \emph{i-cocommutative coalgebra} in $\D$ is an object $M$ which is a cocommutative coalgebra in the symmetric monoidal category $\D$, and which satisfies $t_{M,M}\circ\Delta_M=0$. If $\C$ is another i-braided category, an \emph{i-braided comonoidal functor} $F:\D\to\C$ is a $K$-linear symmetric comonoidal functor $F:\D\to\C$ such that 
$$
\begin{tikzcd}
F(X\otimes Y)\arrow{r}\arrow{d}[swap]{F(t^\D_{X,Y})}& F(X)\otimes F(Y)\arrow{d}{t^\C_{F(X),F(Y)}}\\
F(X\otimes Y)\arrow{r}&F(X)\otimes F(Y)
\end{tikzcd}
$$
commutes. $F$ is \emph{$M$-adapted} if it is $M$-adapted as a braided comonoidal functor between the symmetric monoidal categories $\D$ and $\C$.
\end{defn}

\begin{prop}\label{prop:drinf}
Let $\D$ and $\C$ be i-braided categories. Let $\Phi$ be a Drinfeld associator. If $M\in\D$ is an i-cocommutative coalgebra then it is, with the same coproduct and counit, a cocommutative coalgebra in $\D^\Phi_\hbar$. If $F:\D\to\C$ is an i-braided comonoidal functor then it is, with the same comonoidal structure, a braided comonoidal functor $\D^\Phi_\hbar\to\C^\Phi_\hbar$. If $F:\D\to\C$ is $M$-adapted then it remains $M$-adapted as a functor $\D^\Phi_\hbar\to\C^\Phi_\hbar$.
\end{prop}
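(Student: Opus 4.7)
My plan is to verify the three assertions in turn, with the main computational ingredient being the infinitesimal cocommutativity $t_{M,M}\circ\Delta_M=0$. To show $M$ is a cocommutative coalgebra in $\D^\Phi_\hbar$, I first strictify the underlying symmetric categories so that $\Delta^{(3)}:=(\Delta_M\otimes\id)\circ\Delta_M=(\id\otimes\Delta_M)\circ\Delta_M$ holds on the nose. Coassociativity in $\D^\Phi_\hbar$ then amounts to the identity $\Phi(\hbar t^{1,2},\hbar t^{2,3})\circ\Delta^{(3)}=\Delta^{(3)}$. Using the left presentation of $\Delta^{(3)}$ one obtains $t^{1,2}\circ\Delta^{(3)}=((t_{M,M}\circ\Delta_M)\otimes\id)\circ\Delta_M=0$, and the right presentation yields $t^{2,3}\circ\Delta^{(3)}=(\id\otimes(t_{M,M}\circ\Delta_M))\circ\Delta_M=0$. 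Every non-constant monomial in $\Phi(x,y)$ ends on the right in the letter $x$ or $y$ and therefore annihilates $\Delta^{(3)}$, so only the constant term $\Phi(0,0)=1$ contributes. For cocommutativity, in $\beta_{M,M}\circ\Delta_M=\sigma_{M,M}\circ e^{\hbar t_{M,M}/2}\circ\Delta_M$ the exponential collapses to the identity because $t_{M,M}\circ\Delta_M=0$, leaving $\sigma_{M,M}\circ\Delta_M=\Delta_M$ by cocommutativity of $M$ in $\D$. The counit axioms and unit morphism are unchanged because Drinfeld's construction does not deform the unitors.

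For the braided comonoidal claim, the compatibility of $c_{X,Y}$ with the strict associator and with $\sigma$ is already built into the hypothesis that $F$ is an i-braided comonoidal functor. The i-braided axiom $c_{X,Y}\circ F(t^\D_{X,Y})=t^\C_{F(X),F(Y)}\circ c_{X,Y}$ extends by induction over tensor products (using the recursion $t_{X,Y\otimes Z}=t_{X,Y}\otimes\id+\dots$ from the definition of an i-braided category) to show that $c$ intertwines every polynomial, hence every formal power series, in the appropriate $t^{i,j}$'s on the two sides. In particular $c$ sends $F(\Phi(\hbar t^\D,\hbar t^\D))$ to $\Phi(\hbar t^\C,\hbar t^\C)$ and $F(e^{\hbar t^\D/2})$ to $e^{\hbar t^\C/2}$, which establishes the pentagon for the new associator $\gamma$ and the hexagon for the new braiding $\beta$.

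Finally, for $M$-adaptedness the morphism $F(M)\to 1_\C$ is $\hbar$-independent and therefore stays invertible. The map $\tau^{(M)}_{X,Y}$ viewed in $\D^\Phi_\hbar\to\C^\Phi_\hbar$ differs from its undeformed counterpart only by the insertion of the new associator $\gamma=\Phi(\hbar t^\D,\hbar t^\D)$ in place of the strict one; since $\Phi\equiv 1$ modulo the ideal generated by its variables, the deformed $\tau^{(M)}_{X,Y}$ agrees modulo $\hbar$ with the undeformed isomorphism. Because $\Hom_{\C^\Phi_\hbar}(X,Y)=\Hom_\C(X,Y)[\![\hbar]\!]$, any morphism reducing modulo $\hbar$ to an isomorphism is itself invertible, with inverse built as a formal geometric series. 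The main technical point is the annihilation argument in the first paragraph; I expect no further obstacle, the rest being the standard fact that $\hbar$-adic deformations of isomorphisms are isomorphisms.
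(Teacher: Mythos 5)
Your proof is correct, and it reaches the conclusion by a more computational route than the paper. For the coalgebra claim the paper does not verify coassociativity and cocommutativity in $\D^\Phi_\hbar$ directly: it observes that an i-cocommutative coalgebra $M$ is the same thing as an i-braided comonoidal functor $G:\mathbf 1\to\D$ from the one-object category $\mathbf 1$ (with $t_{I,I}=0$), so the coalgebra statement is a special case of the functor statement, which in turn is declared an immediate consequence of the definitions. Your direct argument --- that $t^{1,2}$ and $t^{2,3}$ each annihilate $\Delta^{(3)}$, so every non-constant monomial of $\Phi$ (reading the rightmost letter as acting first) kills $\Delta^{(3)}$ and only $\Phi(0,0)=1$ survives, and likewise $e^{\hbar t_{M,M}/2}\circ\Delta_M=\Delta_M$ --- is a legitimate substitute and has the virtue of showing concretely why $t_{M,M}\circ\Delta_M=0$ is exactly the right infinitesimal condition; the paper's reduction is shorter and makes the second statement do double duty. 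Your treatment of the functor statement (inducting the intertwining relation over tensor products, using naturality of $c$ and the recursion for $t_{X,Y\otimes Z}$) is the detail the paper suppresses; one small terminological slip: what you establish is the compatibility of the comonoidal structure maps with the new associator and braiding, not ``the pentagon for $\gamma$ and the hexagon for $\beta$'' --- those are axioms of $\C^\Phi_\hbar$ itself, already guaranteed by Drinfeld's proposition. Finally, your $\hbar$-adic argument for preservation of $M$-adaptedness (the deformed $\tau^{(M)}_{X,Y}$ reduces mod $\hbar$ to the undeformed isomorphism, hence is invertible in $\Hom_\C(X,Y)[\![\hbar]\!]$) fills in a point the paper's proof leaves entirely implicit.
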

\begin{proof}
If $F:\D\to\C$ is i-braided comonoidal then the braided comonoidality of $F:\D^\Phi_\hbar\to\C^\Phi_\hbar$ is an immediate consequence of the definitions.

Let $\mathbf 1$ be the symmetric monoidal category with a unique object $I$ and with $\Hom(I,I)=K$. Let us make it i-braided via $t_{I,I}=0$. An i-cocommutative coalgebra $M\in\D$ is equivalent to an i-braided comonoidal functor $G:\mathbf 1\to\D$, with $M=G(I)$. The functor $G$ is thus braided comonoidal as a functor $\mathbf 1_\hbar=\mathbf 1^\Phi_\hbar\to\D^\Phi_\hbar$, which means that $G(I)=M$ is a cocommutative coalgebra in $\D^\Phi_\hbar$.
\end{proof}

\begin{example}\label{ex:lba}
Let $\dd$ be a Lie algebra and $t\in(S^2\dd)^\dd$. Let us suppose that $\dim\mf d<\infty$ and that  $t$ is non-degenerate, and thus defines a symmetric pairing $\la,\ra:\dd\times\dd\to K$. Let $\g\subset\dd$ be a Lie subalgebra which is Lagrangian w.r.t.\ the pairing, i.e.\ $\g^\perp=\g$. Then the functor
$$F:U\dd\text{-mod}\to\mathcal{V}ect,\ F(V)= V/(\g\cdot V),$$
with 
$$c_{V,W}:(V\otimes W)/(\g\cdot(V\otimes W))\to\bigl(V/(\g\cdot V)\bigr)\otimes \bigl(W/(\g\cdot W)\bigr)$$
being the natural projection,
is i-braided comonoidal, as the projection of $t$ to $S^2(\dd/\g)$ vanishes.

If $\g^*\subset\dd$ is another Lagrangian Lie subalgebra, such that $\g\cap\g^*=0$, i.e.\ if $(\g,\g^*\subset\dd)$ is a Manin triple, then 
$$M=U\dd/(U\dd)\g^*,$$
with the coalgebra structure inherited from $U\dd$, is i-cocommutative. The reason is again that the image of $t$ in  $S^2(\dd/\g^*)$ vanishes. The functor $F$ is $M$-adapted.

The Hopf algebra $F(M\otimes M)$ in $\mathcal Vect_\hbar$ (given by Proposition \ref{prop:drinf} and Theorem \ref{thm:main}) is a quantization of the Lie bialgebra $\g$. We  discuss this quantization in detail in Section \ref{sect:qlb}.
\end{example}

\begin{example}\label{ex:blba}
More generally, let $\mf p\subset\dd$ be a coisotropic Lie subalgebra, i.e.\ such that $\mf p^\perp\subset\mf p$. Notice that $\mf p^\perp$ is an ideal in $\mf p$. Let $\h=\mf p/\mf p^\perp$. The functor
$$F:U\dd\text{-mod}\to U\h\text{-mod},\quad F(V)=V/(\mf p^\perp\cdot V)$$
is i-braided comonoidal.

Let $\bar{\mf p}\subset\dd$ be another coisotropic Lie subalgebra such that $\dd=\bar{\mf p}\oplus \mf p^\perp$ as a vector space (for example, $\dd$ can be semisimple with the Cartan-Killing form and $\mf p$ and $\bar{\mf p}$ a pair of opposite parabolic subalgebras; $\mf p^\perp\subset\mf p$ is then the nilpotent radical of $\mf p$). If we set
$$M=U\dd/(U\dd)\bar{\mf p}$$
then $F$ is $M$-adapted.

Proposition \ref{prop:drinf} and Theorem \ref{thm:main} now make $F(M\otimes M)$ to a Hopf algebra in the braided monoidal category $U\h\text{-mod}_\hbar^\Phi$. The object $F(M\otimes M)$ can be naturally identified with $U(\mf p^\perp)$, and gives us a deformation of the standard Hopf algebra structure on $U(\mf p^\perp)$.
\end{example}

\section{Quantization of Lie bialgebras}\label{sect:qlb}

Let us recall that if $\g$ is a Lie algebra and if $m_\hbar$ and $\Delta_\hbar$ are formal deformation of the product and of the coproduct on $U\g$, making $U\g$ (with the original unit and counit) to a bialgebra in $\mathcal Vect_\hbar$, then
$$\delta(x):=(\Delta_\hbar-\Delta^{op}_\hbar)(x)/\hbar\ \text{ mod }\hbar,\quad x\in\g\subset U\g,$$
$$\delta:\g\to\g\otimes\g,$$
is a Lie cobracket and that it makes $\g$ to a Lie bialgebra. The quantization problem of Lie bialgebras is the problem of constructing $m_\hbar$ and $\Delta_\hbar$ out of $[,]$ and $\delta$ in a functorial/universal way.

To solve the problem, let us
reformulate Example \ref{ex:lba} so that it works for infinite-dimensional Lie bialgebras.
Let $\g$ be a Lie bialgebra over a field $K$ of characteristic $0$, with cobracket $\delta:\g\to\g\otimes\g$.
Let $\D$ be the category of $\g$-dimodules, i.e.\ of vector spaces with an action of the Lie algebra $\g$
$$\rho:\g\otimes V\to V$$
and with a  coaction of the Lie coalgebra  $\g$
$$\tilde\rho:V\to \g\otimes V$$
such that
$$(\id\otimes\rho)\circ\sigma_{1,2}\circ(\id\otimes\tilde\rho)=\tilde\rho\circ\rho-
(\id\otimes\rho)\circ(\delta\otimes\id)+
(\id\otimes[,])\circ(\id\otimes\tilde\rho),$$
or equivalently, such that the resulting map
$$(\rho+\tilde\rho^*):(\g\oplus\g^*)\otimes V\to V$$
is an action of the double $\dd=\g\oplus\g^*$. If $\dim\g<\infty$ then $\D$ is simply the category of $U\dd$-modules; in general it is its (full) subcategory.

 The category $\D$ is i-braided, with
$$t_{V,W}:V\otimes W\to V\otimes W$$
given in terms of $\rho$ and $\tilde\rho$ as
$$t_{V,W}=(\id_V\otimes\rho_W)\circ\sigma_{1,2}\circ(\tilde\rho_V\otimes\id_W)+(\rho_V\otimes\id_W)\circ\sigma_{1,2}\circ(\id_V\otimes\tilde\rho_W).$$

Let us now define an i-cocommutative coalgebra $M$ in $\D$. Let
$$M=U\g$$
with the $\g$-action
$$\rho_M(x\otimes y)=xy\quad(x\in\g,y\in U\g)$$
and with the coaction $\tilde\rho_M$ uniquely determined by
\begin{equation}\label{trhoM}
\tilde\rho_M(1)=0;
\end{equation}
in particular,
$$\tilde\rho_M(x)=\delta(x)\text{ for }x\in\g.$$
The usual coproduct $\Delta:M\to M\otimes M$ and counit $\epsilon:M\to K$ of $U\g$ make $M$ to an i-cocommutative coalgebra in $\D$. To verify the i-cocommutativity of $M$ it's enough to check that $t_{M,M}(\Delta 1)=0$, which follows from \eqref{trhoM} as $\Delta(1)=1\otimes 1\in M\otimes M$.

\begin{rem}
The coalgebra $M$ plays an important role in the quantization of Etingof and Kazhdan \cite{EK}, where it is denoted $M_-$. Despite this similarity, the relation between these two quantization methods is unclear to me. For technical reasons Etingof and Kazhdan had to replace $\D$ with a somewhat complicated category of equicontinuous $U\mf d$-modules.
\end{rem}

Let $\mathcal Vect$ denote the symmetric monoidal category of vector spaces over $K$ (we can see it as i-braided with $t_{X,Y}=0$ for all $X,Y\in\mathcal Vect$). Let $F:\D\to\mathcal Vect$ be given by
$$F(V)=V/(\g\cdot V).$$
It is an i-braided comonoidal functor, which is $M$-adapted, with the  comonoidal structure given by the projection
$$(V\otimes W)/\bigl(\g\cdot(V\otimes W)\bigr)\to V/(\g\cdot V)\otimes W/(\g\cdot W).$$

We have a linear bijection
\begin{subequations}\label{eq:ident}
\begin{equation}
F(M\otimes M)\cong U\g
\end{equation}
given by
\begin{equation}\label{eq:ident2}
[x\otimes y]\mapsto S_0(x)y \quad(x,y\in U\g)
\end{equation}
\end{subequations}
where $S_0$ is the usual antipode on $U\g$ and $[x\otimes y]$ denotes the class of $x\otimes y\in M\otimes M$ in $F(M\otimes M)$. The inverse of this bijection is given by
$x\mapsto [1\otimes x]$, $x\in U\g$.

Let is now choose a Drinfeld associator $\Phi$ over $K$ and consider the braided monoidal category $\D^\Phi_\hbar$. By Theorem \ref{thm:main} and Proposition \ref{prop:drinf}, $F(M\otimes M)$ becomes a Hopf algebra in $\mathcal Vect_\hbar^\Phi=\mathcal Vect_\hbar$.

\begin{thm}\label{thm:qlb}
The Hopf algebra structure on $F(M\otimes M)\cong U\g$ in $\mathcal Vect_\hbar$ is a deformation of the cocommutative Hopf algebra $U\g$, and its classical limit is the Lie bialgebra structure on $\g$. 
\end{thm}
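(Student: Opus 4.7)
The plan is to verify three things: the Hopf algebra structure at $\hbar=0$ reduces to the standard cocommutative Hopf algebra $U\g$; the first-order correction to $\Delta_\hbar-\Delta_\hbar^{op}$ reproduces the Lie cobracket $\delta$; and the whole construction is functorial in $\g$.

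For the classical limit at $\hbar=0$, the category $\D_\hbar^\Phi$ reduces to the symmetric monoidal category $\D$, so $\beta_{M,M}=\sigma_{M,M}$ and the associator is trivial. Using the identification \eqref{eq:ident}, I unpack each structure map from Theorem \ref{thm:main} in this symmetric setting. A direct computation gives $(\tau^{(M)}_{M,M})^{-1}([1\otimes a]\otimes[1\otimes b]) = [S_0(a)\otimes 1\otimes b]$ in $F(M^{\otimes 3})$, and applying $F(\id_M\otimes\epsilon_M\otimes\id_M)$ yields $[S_0(a)\otimes b]\mapsto S_0(S_0(a))\cdot b=ab$. Similarly, the coproduct on $[1\otimes a]$ becomes $[1\otimes a_{(1)}]\otimes[1\otimes a_{(2)}]\mapsto\Delta_{U\g}(a)$; the antipode $F(\sigma_{M,M}^{-1})$ sends $[x\otimes y]\mapsto[y\otimes x]\mapsto S_0(y)\cdot x$, which agrees with $S_0^{U\g}$; and the unit and counit match trivially.

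For the first-order deformation, I expand $\beta_{M,M}=\sigma_{M,M}\circ(\id+\tfrac{\hbar}{2}t_{M,M})+O(\hbar^2)$; since $\Phi=1+O(\hbar^2)$, the associator contributes nothing at this order. The coproduct of $M\otimes M$ in $\D_\hbar^\Phi$ is $\Delta_M\otimes\Delta_M$ followed by a braided reordering of the middle two factors, so the $O(\hbar)$ correction involves $t_{M,M}$ acting there. For $\xi\in\g$ with class $[1\otimes\xi]$, I evaluate $t_{M,M}$ on the only surviving middle summand using $t_{V,W}=r_{V,W}+\sigma_{W,V}\circ r_{W,V}\circ\sigma_{V,W}$ and $r(v\otimes w)=(\id\otimes\rho_W)(\tilde\rho_V(v)\otimes w)$, together with $\tilde\rho_M(1)=0$ and $\tilde\rho_M(\xi)=\delta(\xi)$. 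This produces an expression proportional to $\delta(\xi)\in\g\otimes\g\subset M\otimes M$. Translating via \eqref{eq:ident2} (which introduces $S_0(\xi_2)=-\xi_2$) and using the skew-symmetry of $\delta$, I find $(\Delta_\hbar-\Delta_\hbar^{op})(\xi)/\hbar\equiv\delta(\xi)\bmod\hbar$, as required.

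For functoriality, a morphism of Lie bialgebras $\phi:\g\to\g'$ extends to a Hopf algebra morphism $U\phi:U\g\to U\g'$, in particular a coalgebra morphism $M_\g\to M_{\g'}$ which is $\g$-equivariant (viewing $U\g'$ as a $\g$-module via $\phi$) and is compatible with the $\g$-coactions (via $\phi$) in the sense that it intertwines $\tilde\rho_{M_\g}$ with the pushforward of $\tilde\rho_{M_{\g'}}$. These compatibilities let the assignment $[x\otimes y]\mapsto[U\phi(x)\otimes U\phi(y)]$ descend to a well-defined linear map $F_\g(M_\g\otimes M_\g)\to F_{\g'}(M_{\g'}\otimes M_{\g'})$. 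Naturality in $\g$ of the i-braided structures and of all constructions in Theorem \ref{thm:main} then guarantees that this map intertwines product, coproduct, unit, counit, and antipode, so it is a morphism of Hopf algebras which under \eqref{eq:ident} agrees with the $\hbar$-linear extension of $U\phi$. The hard part is the first-order calculation: correctly tracking the braiding convention in \eqref{eq:fus}, the precise form of $t_{M,M}(1\otimes\xi)$, and the effect of $S_0$ in the identification, so that the various signs combine to give $\delta$ rather than $-\delta$.
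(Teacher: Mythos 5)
Your proposal is correct and follows essentially the same route as the paper's own proof: transport the structure maps of Theorem \ref{thm:main} through the identification \eqref{eq:ident}, use $\Phi=1+O(\hbar^2)$ to see the product is undeformed to first order, extract the $O(\hbar)$ term of the coproduct from the $t_{M,M}$-correction to the braiding in \eqref{eq:fus} together with $\tilde\rho_M(1)=0$ and $\tilde\rho_M(\xi)=\delta(\xi)$ (with the sign fixed by $S_0$ in \eqref{eq:ident2}), and deduce functoriality from naturality of $\D$, $M$, $F$ and the identification. The only (harmless) difference is that you verify the $\hbar=0$ structure maps one by one, including the antipode, where the paper works directly with $O(\hbar^2)$ estimates.
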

\begin{proof}
Let us identify $U\g\otimes U\g$ and $F((M\otimes M)\otimes M)$  via the linear map
$$x\otimes y\mapsto [S_0(x)\otimes 1\otimes y]\quad (x\otimes y\in U\g\otimes U\g).$$
The isomorphism in $\mathcal Vect_\hbar$
$$\tau^{(M)}_{M,M}:F((M\otimes M)\otimes M)\to F(M\otimes M)\otimes F(M\otimes M)$$
then  becomes an isomorphism (of vector spaces)
$$U\g\otimes U\g\to U\g\otimes U\g,$$
which is, by the definition of $\tau^{(M)}_{M,M}$ and by \eqref{eq:ident}, of the form
$$x\otimes y\mapsto x\otimes y+O(\hbar^2)$$
(as $\Phi(\hbar t^{1,2}\!,\hbar t^{2,3})=1+O(\hbar^2)$).
On the other hand, the map
$$F((M\otimes M)\otimes M)\xrightarrow{F(\id\otimes\epsilon\otimes\id)}F(M\otimes M)$$
becomes under these identifications
$$x\otimes y\mapsto xy,\quad U\g\otimes U\g\to U\g.$$
The product on $F(M\otimes M)\cong U\g$ is thus $x\otimes y\mapsto xy+O(\hbar^2)$.

Let us now compute the coproduct $\Delta_\hbar$ on $F(M\otimes M)\cong U\g$ to first order in $\hbar$. Let us recall that $M\otimes M$ is a coalgebra in $\D_\hbar^\Phi$, with the coproduct given by \eqref{eq:fus} (with $M'=M$). For $x\in\g$ we get
$$\Delta_{M\otimes M}(1\otimes x)=1\otimes x\otimes1\otimes 1+1\otimes1\otimes1\otimes x-\frac{\hbar}{2}\,1\otimes\delta(x)\otimes1+O(\hbar^2).$$
The coproduct $\Delta_\hbar$ on $F(M\otimes M)\cong U\g$ is thus
$$\Delta_\hbar(x)=x\otimes1+1\otimes x+\frac{\hbar}{2}\,\delta(x)+O(\hbar^2)$$
(where the sign change comes from $(\id\otimes S_0)(\delta(x))=-\delta(x)$),
hence
$$(\Delta^{\vphantom{op}}_\hbar-\Delta^{op}_\hbar)(x)=\hbar\,\delta(x)+O(\hbar^2),$$
as we wanted to show.
\end{proof}
Let us stress that the Hopf algebra structure on $F(M\otimes M)\cong U\g$ depends on the associator $\Phi$. Curiously, the antipode is independent of $\Phi$.

\begin{rem}[Quantization in terms of PROPs]
Etingof and Kazhdan proved in \cite{EK2} that their quantization is given by ``universal formulas'' in the following sense (which, in particular, implies functoriality of their quantization). Let $\on{LieBialg}$ be the PROP of Lie bialgebras and let
$\on{HcP}:=S(\on{LieBialg})$ ($\on{HcP}$ is the PROP of Hopf co-Poisson algebras). Let us denote by $\g_\textit{univ}$ the generating object of $\on{LieBialg}$ ($\g_\textit{univ}$ is the universal Lie bialgebra) and by $S\g_\textit{univ}$ the generating object of $\on{HcP}$. Then there is a Hopf algebra structure $(m_\hbar,\Delta_\hbar,S_\hbar,\eta_0,\epsilon_0)$ on $S\g_\textit{univ}$ (i.e.\ a PROP morphism from the PROP of Hopf algebras to $\on{HcP}$, formally depending on $\hbar$) which is a deformation of the universal enveloping algebra 
$$U\g_\textit{univ}=(S\g_\textit{univ},m_0,\Delta_0,S_0,\eta_0,\epsilon_0)$$
 in the direction of the cobracket of $\g_\textit{univ}$.
 
 The same is true for our quantization. Let $M:=S\g_\textit{univ}$ with its $\g_\textit{univ}$-dimodule structure as above. Let $\D$ be the full monoidal subcategory of the category of $\g_\textit{univ}$-dimodules in $\on{HcP}$, generated by $M$ (i.e.\ the objects of $\D$ are $1,M,M^{\otimes2},\dots$); the category $\D$ is i-braided and $M$ is an i-cocommutative coalgebra in $\D$. Let $\C=\on{HcP}$.
 
 Finally we need a $M$-adapted functor $F:\D\to\C$, which is simply 
 $$F(M^{\otimes k})=(S\g_\textit{univ})^{\otimes(k-1)}$$
  (``$\g_\textit{univ}$-coinvariants'').
Theorem \ref{thm:main} and Proposition \ref{prop:drinf} now produce a Hopf algebra structure on $S\g_\textit{univ}$.
\end{rem}

\begin{rem}[Quantization of infinitesimally braided Lie bialgebras]
If we apply Theorem \ref{thm:main} and Proposition \ref{prop:drinf} to Example \ref{ex:blba}, we make
$$F(M\otimes M)\cong U(\mf p^\perp)$$
to a Hopf algebra in the braided monoidal category $U\h\text{-mod}_\hbar^\Phi$, deforming the standard Hopf algebra structure on $U(\mf p^\perp)$. This is a special case of quantization of Lie bialgebras in (Abelian) i-braided categories, which is a minor generalization of what we did in above.

By a Lie bialgebra in  an i-braided category $\C$ we mean an object $\g$ in $\C$, together with a Lie bracket $\mu:\g\otimes\g\to\g$ and a Lie cobracket $\delta:\g\to\g\otimes\g$ such that
$$\delta\circ\mu=(\mu\otimes\id+(\id\otimes\mu)\circ\sigma_{12})\circ(\id\otimes\delta)
\circ(\id-\sigma_{12})+t_{\g,\g}\circ(\id-\sigma_{12})/2$$
(besides the $t_{\g,\g}$-term, it is the standard definition of a Lie bialgebra). As an example, $\mf p^\perp$ is a Lie bialgebra in the i-braided category $U\h$-mod, with the cobracket given by the Lie bracket on $\bar{\mf p}^\perp$.

 Let $\D$ be the category whose objects are objects $V$ of $\C$ together with a left action $\rho:\g\otimes V\to V$ and a right coaction $\tilde\rho:V\to V\otimes\g$, such that
$$(\id\otimes\rho)\circ\sigma_{1,2}\circ(\id\otimes\tilde\rho)=\tilde\rho\circ\rho+
(\id\otimes\rho)\circ(\delta\otimes\id)-
(\id\otimes\mu)\circ(\id\otimes\tilde\rho)+ t_{\g,V}.$$
Category $\D$ is i-braided, with
$$t^\D_{V,W}=(\id_V\otimes\rho_W)\circ\sigma_{1,2}\circ(\tilde\rho_V\otimes\id_W)+(\rho_V\otimes\id_W)\circ\sigma_{1,2}\circ(\id_V\otimes\tilde\rho_W)+t^\C_{V,W}.$$

Supposing that $\C$ is an Abelian category, so that we can make sense of $U\g$ and of $\g$-coinvariants, we define $M=U\g\in\D$ and $F:\D\to\C$ as above, and $F(M\otimes M)$ becomes a Hopf algebra in $\C_\hbar^\Phi$ deforming $U\g$.
\end{rem}

\begin{rem}[Quantization of twists]
The quantization of Lie bialgebras given by Theorem \ref{thm:qlb} is compatible with twists in the following sense. Suppose that $j\in\bigwedge^2\g$ is a twist of the Lie bialgebra $(\g,[,],\delta)$, i.e.\ that
$$\g^*_j:=\{j(\alpha,\cdot)+\alpha;\,\alpha\in\g^*\}\subset\dd$$
is a Lie subalgebra of the Drinfeld double $\dd$. Since $(\g,\g^*_j,\dd)$ is a Manin triple, we get a new Lie bialgebra structure on $\g$, with the original bracket and with the new cobracket $\delta_j(u)=\delta(u)+[1\otimes u+u\otimes1,j]$. Let $H$ be the quantization of $(\g,[,],\delta)$ and $H_j$ the quantization of $(\g,[,],\delta_j)$ (as given by Theorem \ref{thm:qlb}; we have $H=H_j=U\g$ as vector spaces). Then there is a twist $J\in H\otimes H[[\hbar]]$, i.e.\ an  element satisfying
$$J^{12,3}J^{1,2}=J^{1,23}J^{2,3}\quad\text{and}\quad (\epsilon\otimes\id)(J)=(\id\otimes\epsilon)(J)=1$$
(where $J^{12,3}=(\Delta\otimes\id)(J)$, $J^{1,2}=J\otimes1$, etc.),
such that $J=1+O(\hbar)$ and $(J-J^{op})=\hbar j+O(\hbar^2)$.
Moreover, there is an isomorphism of Hopf algebras
$$I:H^{(J)}\cong H_j,\quad I=\id+O(\hbar^2),$$
where $H^{(J)}$ has the same product, unit and counit as $H$, and the coproduct 
$$\Delta_{H^{(J)}}(a)=J^{-1}\Delta_H(a)J.$$

This statement was proven for Etingof-Kazhdan quantization by Enriquez and Halbout \cite{EH}, but their proof required a considerable effort. In our case it is a consequence of the following observation. If, in the context of Theorem \ref{thm:main}, $N\in\D$ is another cocommutative coalgebra, and $F:\D\to\C$ is both $M$- and $N$-adapted, then the coalgebra $F(M\otimes N)$ is a $F(M\otimes M)\,$-$\,F(N\otimes N)$ bimodule in the category of coalgebras in $\C$, with the action given by
 \begin{gather*}
F(M\otimes M)\otimes F(M\otimes N)\xrightarrow{{\tau_{M,N}^{(M)}}^{-1}}
 F((M\otimes M)\otimes N)\xrightarrow{F(\id_M\otimes\epsilon_M\otimes\id_N)}F(M\otimes N)\\
F(M\otimes N)\otimes F(N\otimes N)\xrightarrow{{\tau_{M,N}^{(N)}}^{-1}} 
 F((M\otimes N)\otimes N)\xrightarrow{F(\id_M\otimes\epsilon_N\otimes\id_N)}F(M\otimes N).
\end{gather*}

To apply it, let $\D,\C,M,F$ as in the proof of Theorem \ref{thm:qlb} (in particular, $\D$ is the category of $(\g,[,],\delta)$-dimodules), and let $\D_j,M_j,F_j$ be the corresponding objects for the Lie bialgebra $(\g,[,],\delta_j)$ in place of $(\g,[,],\delta)$. Notice that the categories $\D$ and $\D_j$ are naturally isomorphic (essentially because $\g$ and $\g_j$ have the same double $\dd$): if 
$$(V,\rho:\g\otimes V\to V,\tilde\rho:V\to V\otimes\g)$$
is a $(\g,[,],\delta)$-dimodule, then $(V,\rho,\tilde\rho_j)$ is  a $(\g,[,],\delta_j)$-dimodule, with
$$\tilde\rho_j(v)=\tilde\rho(v)+\sum_ij^1_i\otimes\rho(j^2_i\otimes v)$$
where $j=\sum_i j^1_i\otimes j^2_i$. If we identify $\D_j$ with $\D$ using this isomorphism, we get $\D_j=\D$, $F_j=F$, while $M_j$ becomes the dimodule $N\in\D$, $N=U\g$,
 with the coaction $\tilde\rho:N\to N\otimes\g$ uniquely determined by
$$\tilde\rho(1)=j.$$
We have $H=F(M\otimes M)$ and $H_j=F(N\otimes N)$.  Let us denote by $B$ the $H\,$-$\,H_j$ bimodule $F(M\otimes N)$ (in the category of coalgebras in $\mathcal Vect_\hbar$).

Notice that $H=H_j=B=U\g$ as vector spaces, and that the action $H\otimes B\otimes H_j\to B$ is of the form $a\otimes b\otimes c\mapsto a\cdot b\cdot c= abc+O(\hbar^2)$. Now we can define the twist $J\in H\otimes H[[\hbar]]$  by
\begin{equation}\label{eq:J-B}
J\cdot(1\otimes 1)=\Delta_B 1\quad (1\otimes 1\in B\otimes B, 1\in B)
\end{equation}
and the isomorphism $I:H^{(J)}\cong H_j$ by
$$a\cdot 1=1\cdot I(a)\quad (1\in B, a\in H, I(a)\in H_j).$$ The fact that $J$ is a twist follows from coassociativity of $\Delta_B$: the equality 
$$(\Delta_B\otimes\id)\Delta_B 1=(\id\otimes\Delta_B)\Delta_B 1$$ and \eqref{eq:J-B} give $$J^{12,3}J^{1,2}\cdot1\otimes1\otimes1=J^{1,23}J^{2,3}\cdot1\otimes1\otimes1.$$
\end{rem}

\section{Quantization of Poisson-Lie groups}

In this section we dualize our quantization of Lie bialgebras to get a deformation quantization of Poisson-Lie groups. This translation is straightforward (replacing coalgebras with algebras and comonoidal functors with monoidal functors), but we describe the resulting star-product with some detail, to reveal the geometric intuition behind the algebraic constructions. Our quantization of Poisson-Lie groups can be seen a special case of the deformation quantization of moduli spaces of flat connections studied in \cite{LBS},  though this particular quantization of Poisson-Lie groups was missed in \emph{op.~cit.}

Let $G$ be a Poisson-Lie group.
Let $\g$ be its Lie algebra and $(\g,\g^*\subset\dd$) the corresponding Manin triple. There is an action $\rho$ of $\dd$ on $G$ (the ``dressing action''), given by
\begin{alignat}{2}
\rho(v)&=v_L &\quad&\text{ for } v\in\g\\
\rho(\alpha)&=\pi(\cdot,\alpha_L) &\quad&\text{ for } \alpha\in\g^*\!,
\end{alignat}
where $\pi$ is the Poisson bivector field on $G$ and $x_L$ means $x$ left-transported over $G$. 
For any $f_1,f_2\in C^\infty(G)$ we then have
$$\{f_1,f_2\}=\sum_i(\rho(e^i)f_1)\,(\rho(e_i)f_2),$$
where $e_i$ is a basis of $\g$ and $e^i$ the dual basis of $\g^*$. The stabilizer of $g\in G$ is $\on{Ad}_g\g^*\subset\dd$.

The action $\rho$ makes  $C^{\infty}(G)$ to an algebra in $U\dd\text{-mod}$. This algebra is i-commuta\-tive:
 $$m\circ \rho^{\otimes2}(t)=0,$$
  where $m$ is the product, as the stabilizers of $\rho$ are coisotropic. As a result, $C^{\infty}(G)$, with its original product, is a commutative associative algebra in $U\dd\text{-mod}_\hbar^\Phi$.

We can now make $C^\infty(G\times G)$ to an associative (but not commutative) algebra in $U\dd\text{-mod}_\hbar^\Phi$. $C^\infty(G\times G)=C^\infty(G)\mathop{\hat\otimes} C^\infty(G)$, being a (completed) tensor product of algebras, is again an algebra in $U\dd\text{-mod}_\hbar^\Phi$, with the product
$$m_\hbar=
\begin{tikzpicture}[baseline=1cm]
\coordinate (diff) at (0.2,0);
\coordinate (dy) at (0,0.5);
\node(A1) at (0,0) {};
\node(B1) at ($(A1)+(diff)$) {};
\node(A2) at (2,0) {};
\node(B2) at ($(A2)+(diff)$) {};
\node(A3) at ($(A1)+(0,2.5)+0.5*(diff)$) {};
\node(B3) at ($(A2)+(0,2.5)+0.5*(diff)$) {};
\coordinate(A) at ($(A3)-(dy)$);
\coordinate(B) at ($(B3)-(dy)$);
\draw[line width=1ex,white]  (A2)..controls +(0,1) and +(0,-1)..(A);
\draw (A2)..controls +(0,1) and +(0,-1)..(A);
\draw[line width=1ex,white] (B1)..controls +(0,1) and +(0,-1)..(B);
\draw (B1)..controls +(0,1) and +(0,-1)..(B);

\draw (A1)..controls +(0,1) and +(0,-1)..(A);
\draw (B2)..controls +(0,1) and +(0,-1)..(B);
\draw (B)--(B3);
\draw(A)--(A3);
\end{tikzpicture}
=m_0\circ(\mathcal T\cdot),
$$
where $m_0$ is the ordinary product on $C^\infty(G\times G)$ and 
 $$\mathcal T \in U\dd^{\otimes 4}[\![\hbar]\!]$$
is given by the parenthesized braid \eqref{eq:parb}.

Observe finally that $C^\infty(G\times G)^\g\subset C^\infty(G\times G)$ with the product $m_\hbar$ is  an associative algebra in $\mathcal Vect_\hbar$ (as the functor $X\mapsto X^\g$, $U\dd\text{-mod}_\hbar^\Phi\to\Vect_\hbar$ is monoidal).

\begin{prop}
If we identify $C^\infty(G)$ with $C^\infty(G\times G)^\g$ via $\tilde f(g_1,g_2)=f(g_1^{\vphantom{1}}g_2^{-1})$ ($f\in C^\infty(G)$, $\tilde f\in C^\infty(G\times G)^\g$) then $m_\hbar$ becomes a star-product on $G$ quantizing the Poisson structure $\pi$.
\end{prop}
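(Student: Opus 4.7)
The plan is to (i) check that $f\mapsto\tilde f$ is a bijection, (ii) show $m_\hbar$ deforms the pointwise product, and (iii) identify its antisymmetric first-order term with the Poisson bracket. Associativity of the restriction and its status as a formal deformation of $C^\infty(G)$ come for free from Theorem~\ref{thm:main} together with the fact, recalled just above, that $V\mapsto V^\g$ is a monoidal functor $U\dd\text{-Mod}_\hbar^\Phi\to\mathcal Vect_\hbar$.

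For (i), the coproduct-induced $\g$-action on $C^\infty(G\times G)$ is $v\mapsto v_L^{(1)}+v_L^{(2)}$, whose flow is the diagonal right action $(g_1,g_2)\mapsto(g_1h,g_2h)$. The orbit map $(g_1,g_2)\mapsto g_1g_2^{-1}$ identifies the quotient with $G$, and $\tilde f(g_1,g_2)=f(g_1g_2^{-1})$ is the resulting bijection on smooth functions.

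For (ii) and (iii), I would expand $\mathcal T=1+\tfrac{\hbar}{2}t^{2,3}+O(\hbar^2)$; this uses $\Phi=1+O(\hbar^2)$ (valid for every Drinfeld associator) and the fact that the parenthesised braid \eqref{eq:parb} contains only the single crossing of positions $2$ and $3$. Under the identification $f(g)=\tilde f(g,e)$,
\begin{equation*}
(f_1*f_2)(g)=\bigl[\mathcal T\cdot(\tilde f_1\otimes\tilde f_2)\bigr](g,e,g,e),
\end{equation*}
with leading term $f_1(g)f_2(g)$, proving (ii). For the $\hbar$-coefficient, two simplifications occur at $(g,e,g,e)$: by $\g$-invariance of $\tilde f_1$, the slot-$2$ action of $e_k\in\g$ equals minus its slot-$1$ action, giving $-\rho(e_k)f_1(g)$; and because the Poisson bivector vanishes at $e\in G$, the dressing vector field $\rho(\alpha)|_e=0$ for every $\alpha\in\g^*$, killing the $\sum_k(e^k)^{(2)}(e_k)^{(3)}$ half of $t^{2,3}$. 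What remains gives
\begin{equation*}
(f_1*f_2)(g)=f_1(g)f_2(g)-\tfrac{\hbar}{2}\sum_k\rho(e_k)f_1(g)\,\rho(e^k)f_2(g)+O(\hbar^2).
\end{equation*}

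To finish, I would invoke the i-commutativity of $C^\infty(G)$, $m\circ\rho^{\otimes2}(t)=0$, which pointwise reads $\sum_k\rho(e_k)f_1\,\rho(e^k)f_2=-\sum_k\rho(e^k)f_1\,\rho(e_k)f_2$. Subtracting $f_2*f_1$ and applying this identity converts the asymmetric order-$\hbar$ correction into $\hbar\sum_k\rho(e^k)f_1\,\rho(e_k)f_2+O(\hbar^2)=\hbar\{f_1,f_2\}+O(\hbar^2)$, as required. The main bookkeeping is in correctly identifying which half of the symmetric $t$ survives under $\g$-invariance and the vanishing of~$\pi$ at $e$; this asymmetry is precisely what produces the Poisson structure.
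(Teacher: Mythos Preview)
Your argument is correct and follows the same route as the paper: expand $\mathcal T$ to first order in $\hbar$, read off the order-$\hbar$ correction as $-\tfrac{\hbar}{2}\sum_k\rho(e_k)f_1\,\rho(e^k)f_2$, and identify this with $\tfrac{\hbar}{2}\{f_1,f_2\}$ via i-commutativity of $C^\infty(G)$. You are simply more explicit than the paper about how the identification $C^\infty(G)\cong C^\infty(G\times G)^\g$ is used (evaluation at $(g,e,g,e)$, $\g$-invariance to move slot~2 to slot~1, and $\pi|_e=0$ to kill the other half of $t^{2,3}$); note that your sign in $\mathcal T=1+\tfrac{\hbar}{2}t^{2,3}+\cdots$ differs from the paper's $1-\tfrac{\hbar}{2}t^{2,3}+\cdots$, but the extra minus you pick up from the $\g$-invariance conversion brings you to the same intermediate formula, so this is a convention issue rather than an error.
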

\begin{proof}
The fact that $m_\hbar$ is a star-product, i.e.\ that it is associative and that its coefficients in the $\hbar$-power series are bidifferential operators, is clear from the construction. We need to verify that $$m_\hbar(f_1,f_2)-m_\hbar(f_2,f_1)=\hbar\{f_1,f_2\}+O(\hbar^2).$$
Since $\mathcal T=1-\hbar\,t^{2,3}/2+O(\hbar^2)$, we get
$$m_\hbar(f_1,f_2)=f_1f_2-\frac{\hbar}{2}\sum_i(\rho(e_i)f_1)\,(\rho(e^i)f_2)+O(\hbar^2)=f_1f_2+\frac{\hbar}{2}\{f_1,f_2\}+O(\hbar^2).$$
\end{proof}

Theorem \ref{thm:main} (in its dualized version, with the algebra $C^\infty(G)$ in place of a coalgebra $M$, and the monoidal functor $X\mapsto X^\g$ in place of a comonoidal functor $F$) gives us a coproduct
$$\Delta_\hbar:C^\infty(G)\to C^\infty(G\times G)$$
which, together with $m_\hbar$,  makes $C^\infty(G)$ to a (topological) bialgebra in $\mathcal Vect_\hbar$. The coproduct is of the form.
$$\Delta_\hbar=D_\hbar\circ\Delta_0$$
where $\Delta_0:C^\infty(G)\to C^\infty(G\times G)$ is the ordinary coproduct, 
$$(\Delta_0 f)(g_1,g_2)=f(g_1g_2),$$ and
$$D_\hbar=1+\hbar^2 D_2+\hbar^3 D_3+\dots$$
is a differential operator  on $G\times G$.

Let us conclude by recalling  the proof of Theorem \ref{thm:main} in our current context, to see its geometrical meaning. For any $k\in\mathbb N$ we have
$$C^\infty(G^{k+1})=
\underbrace{C^\infty(G)\mathop{\hat\otimes}\dots\mathop{\hat\otimes}C^\infty(G)}_{k+1}.$$
By viewing it as a tensor product of algebras in  $U\dd\text{-mod}_\hbar^\Phi$ we make $C^\infty(G^{k+1})$ to an associative algebra in this category (we need to choose a parenthesization of the tensor product, but all the choices are canonically isomorphic). We can view this sequence of algebras as a cosimplicial algebra (namely the bar construction of the commutative algebra $C^\infty(G)$); we thus get a deformation quantization (in $U\dd\text{-mod}_\hbar^\Phi$)  of the simplicial manifold $E_\bullet G=G^{\bullet+1}$. By taking the $\g$-invariants we get a deformation quantization (in $\mathcal Vect_\hbar$) of the nerve $B_\bullet G=(E_\bullet G)/G$, which is finally the cobar construction of the resulting Hopf algebra.

\end{document}